\theoremstyle{definition}
\newtheorem{definition}{Definition}
\theoremstyle{plain}
\newtheorem{theorem}[definition]{Theorem}
{Lemma}
{Theorem}
\newtheorem{lemma}[definition]{Lemma}
\newtheorem{prop}[definition]{Proposition}
\newtheorem{fact}[definition]{Fact}
\newtheorem{claim}[definition]{Claim}
{Conjecture}
\theoremstyle{remark}
\newcommand{\pp}{P}
\newcommand{\ppk}{P^{(k)}}
\begin{document}

\title[Ramsey number for $k$-paths]
{On multicolor Ramsey numbers\\ for loose $k$-paths of length three}

\author[T.~{\L}uczak]{Tomasz {\L}uczak}

\address{Adam Mickiewicz University,
Faculty of Mathematics and Computer Science
ul.~Umultowska 87,
61-614 Pozna\'n, Poland}

\email{\tt tomasz@amu.edu.pl}

\author[J.~Polcyn]{Joanna Polcyn}

\address{Adam Mickiewicz University,
Faculty of Mathematics and Computer Science
ul.~Umultowska 87,
61-614 Pozna\'n, Poland}

\email{\tt joaska@amu.edu.pl}

\author[A.~Ruci\'nski]{Andrzej Ruci\'nski}

\address{Adam Mickiewicz University,
Faculty of Mathematics and Computer Science
ul.~Umultowska 87,
61-614 Pozna\'n, Poland}

\email{\tt rucinski@amu.edu.pl}

\thanks{The first author
supported by Polish NSC grant 2012/06/A/ST1/00261. The third author supported by the Polish NSC grant 2014/15/B/ST1/01688}

\keywords {Ramsey number, hypergraphs, paths}

\subjclass[2010]{Primary: 05D10, secondary: 05C38, 05C55, 05C65. }

\date{March 18, 2017}

\begin{abstract}
We show that there exists an absolute constant $A$ such that for each $k\ge2$ and every coloring of the edges of the complete $k$-uniform hypergraph on $ Ar$ vertices
with $r$ colors, one of the color classes contains a loose path of length three.
\end{abstract}

\maketitle

\section{Introduction} For $k\ge2$, a \emph{$k$-uniform} hypergraph (or, briefly, a $k$-graph) is an ordered pair $H=(V,E)$, where $V=V(H)$ is a finite set and $E=E(H)$ is a subset of the set $\binom Vk$ of all $k$-element subsets of $V$. If $E=\binom Vk$,  we call $H$ \emph{complete} and denote it by $K^{(k)}_n$, where $n=|V|$. The elements of $V$ and $E$ are called, respectively, the vertices and edges of $H$. We often identify $H$ with $E(H)$, writing, for instance, $|H|$ instead of $|E(H)|$. The degree $\deg_H(v)$ of a vertex $v$ in $H$ equals the number of edges of $H$ which contain $v$.  \emph{A star} is a $k$-graph $S$ with a vertex $v$ contained in all the edges of $S$. A star is full if it consists of all sets in $\binom Vk$ containing~$v$, that is, in which $\deg_S(v)=\binom{n-1}{k-1}$.

Let $\ppk$ denote a loose $k$-uniform path (shortly, a $k$-path) of length three, that is, the only connected $k$-graph on $3k-2$ vertices with three edges and no vertex of degree three. In this paper we study the multicolor Ramsey number $R(\ppk;r)$ for
$\ppk$ defined as  the smallest number $n$ such that each
coloring of the edges of the complete $k$-graph $K^{(k)}_n$ with $r$ colors
leads to a monochromatic copy of $\ppk$. In the graph case, i.e. when $k=2$, it
is easy to see that  the value of $R(\pp;r)$
 is equal to $2r+c_r$, where $c_r\in\{0,1,2\}$ and depends on the divisibility  of $r$ by three (see \cite{Radz}). On the other hand,  coloring the edges of $K_{r+3k-4}^{(k)}$ by $r-1$ stars and one clique $K_{3k-3}^{(k)}$ (e.g., \cite{J}, Proposition~3.1) shows that
\begin{equation}\label{eq1}
R(\ppk;r)\ge  r+3k-3.
 \end{equation}
It is conjectured that for each $k\ge 3$  and all $r$ there is equality in~$(\ref{eq1})$. So far,  it has been verified only for $k=3$ and $r=2,3,\dots, 10$ (\cite{GR, J, JPR, PR, P}). In fact, for $k=3$ and $r=2$ the Ramsey number has been determined for paths of all lengths \cite{Omidi}.

A general upper bound on $ R(\ppk;r)$, $k\ge3$, follows by a standard application of Tur\'an numbers. Indeed, it was proved by F\"uredi, Jiang, and Seiver~\cite{FJS} that for $n\ge n_0(k)$
the unique largest
$\ppk$-free $k$-graph on $n$ vertices is a full star which contains at most $\binom {n-1}{k-1}$ edges. From this it follows that for $r$ large enough $R(\ppk;r)\le kr +1$ and if
we use the fact that the extremal graph is unique we get (see \cite{J}, Proposition~3.2)
\begin{equation}\label{eq2}
R(\ppk;r)\le  kr,
 \end{equation}
 valid for all $k\ge3$ and $r\ge r_0(k)$. Similar results for loose cycles of length three were obtained by Gy\'{a}rf\'{a}s and Raeisi~\cite{GR}.
For $k=3$,
it was proved by {\L}uczak and Polcyn~\cite{LP} that $R(P^{(3)};r)\le 2r+O(\sqrt r)$. The main goal of this paper is to show that for $r$ large enough $R(\ppk;r)/r$ is bounded from above by a constant which does not depend on $k$.

\begin{theorem}\label{thm1}
For each $k\ge 3$ there exists $r_k$ such that for all $r\ge r_k$
\begin{equation*}
R(\ppk;r)\le 250r.
 \end{equation*}
\end{theorem}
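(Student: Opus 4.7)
The plan is to argue by contradiction, assuming that for some $r\ge r_k$ there exists an $r$-coloring of $K_n^{(k)}$ with $n=250r$ in which no color contains $P^{(k)}$. I would split the argument at $k=250$. For the range $3\le k\le 250$, the F\"uredi--Jiang--Seiver bound~\eqref{eq2} already yields $R(P^{(k)};r)\le kr\le 250r$ once $r\ge r_0(k)$, so nothing new is required. Hence the heart of the work is in the regime $k>250$, where the naive Tur\'an bound is too weak.

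For $k>250$ the key input is a quantitative \emph{stability} version of the F\"uredi--Jiang--Seiver uniqueness theorem. I would aim to show that every $P^{(k)}$-free $k$-graph $H$ on $n$ vertices is either \emph{sparse}, with $|H|\le\eta\binom{n-1}{k-1}$, or a \emph{near-star}, meaning that some vertex $v$ satisfies $\deg_H(v)\ge(1-\eta)\binom{n-1}{k-1}$ while at most $\eta\binom{n-1}{k-1}$ edges of $H$ avoid $v$. The engine of this stability is the observation that a single edge $e\not\ni v$ together with the star at $v$ creates many copies of $P^{(k)}$: for each $w\in e$, each $(k-2)$-subset $U\subseteq V\setminus(\{v\}\cup e)$ and each $(k-1)$-subset $W\subseteq V\setminus(\{v\}\cup e\cup U)$, the triple $e$, $\{v,w\}\cup U$, $\{v\}\cup W$ forms a loose path, so each ``external'' edge of $H$ forces a large number of star-edges at $v$ to be missing.

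Given the dichotomy, I would assign each near-star color its center $v_i$. The relation $\sum_{i\colon v_i=v}\deg_{H_i}(v)\le\binom{n-1}{k-1}$ forces at most $(1-\eta)^{-1}$ near-star colors to share any given vertex, so for $\eta<1/2$ the set $U$ of centers satisfies $|U|\le r$. Restricting the coloring to $V':=V\setminus U$ of size $n'\ge 249r$, each color contributes at most $\eta\binom{n-1}{k-1}$ edges inside $V'$ (near-star colors because their center has been removed, sparse colors because they were already bounded), so
\[
\binom{249r}{k}\;\le\;\binom{n'}{k}\;=\;\sum_{i=1}^{r}\Big|H_i\cap\binom{V'}{k}\Big|\;\le\;r\eta\binom{n-1}{k-1}.
\]
Converting through $\binom{n-1}{k-1}=(k/n)\binom{n}{k}$, this reduces to $\eta\gtrsim(250/k)(249/250)^k$, and arranging the stability with $\eta$ strictly below this threshold produces the contradiction.

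The main obstacle is the strength of stability required: the threshold $(250/k)(249/250)^k$ decays like $k^{-1}e^{-k/250}$, so the stability must be near-rigid uniformly across the whole range $250<k\le (n+2)/3$ where $P^{(k)}$ actually fits inside $K_n^{(k)}$. Proving it will likely hinge on a careful supersaturation argument that double-counts pairs (external edge, missing star-edge) weighted by the number of $P^{(k)}$-completions they participate in; the counting grows more delicate as $k$ approaches $n/3$ and the room in $V\setminus(\{v\}\cup e)$ for the remaining two path-edges shrinks. A subsidiary technicality is unifying the ``sparse'' and ``near-star'' bounds behind a single parameter $\eta$, which should follow by choosing the dichotomy's threshold so that both cases contribute comparably to the final double count.
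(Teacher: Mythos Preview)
Your overall strategy coincides with the paper's: split at $k=250$, invoke~\eqref{eq2} for small $k$, and for large $k$ prove a stability lemma for $P^{(k)}$-free $k$-graphs, remove one ``center'' per color, and double-count the surviving edges. The final arithmetic and the threshold $\eta\sim(250/k)(249/250)^k$ are also correct.

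There are, however, two problems with the stability step.

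First, your dichotomy as stated is false. Take $k=250$, so that your threshold is $\eta\approx 0.37$. A $P^{(k)}$-free $k$-graph $H$ with $|H|=\tfrac12\binom{n-1}{k-1}$ is not ``sparse'', yet no vertex can satisfy $\deg_H(v)\ge(1-\eta)\binom{n-1}{k-1}>|H|$. The fix is painless: drop the degree lower bound from the definition of ``near-star'' and keep only the condition that at most $\eta\binom{n-1}{k-1}$ edges avoid~$v$. This is precisely the paper's Lemma~\ref{l1}, and once you adopt it the discussion of shared centers becomes superfluous: simply pick one center per color, so the set of centers has size at most $r$ trivially.

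Second, and more substantially, your supersaturation sketch only covers the \emph{bootstrap} half of the stability. The observation that an external edge $e\not\ni v$ forces many star-edges at $v$ to be absent presupposes that you already have a vertex $v$ of substantial degree at which to play this game. It upgrades ``some vertex has degree $\ge b\binom{n-1}{k-1}$'' to ``few edges avoid that vertex'' --- essentially the paper's Proposition~\ref{f2} --- but it does nothing to explain why a dense $P^{(k)}$-free hypergraph should have any high-degree vertex at all. That existence statement is the genuinely hard part of the argument (the paper's Proposition~\ref{relaxed}), and the paper establishes it not by supersaturation at a fixed vertex but by analyzing the shadow of $H$, randomly splitting $V(H)$ to extract three edge-disjoint dense $(k-1)$-uniform link systems, finding a rainbow loose $(k-1)$-path of length three among them, and extending it to a copy of $P^{(k)}$. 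Your plan contains no mechanism for this step.

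Finally, your worry about $k$ approaching $n/3$ is a phantom: since $n=250r$ with $r\ge r_k$ at your disposal, you may take $n$ arbitrarily large relative to $k$, and the paper does exactly this.
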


\section{Proof of Theorem~\ref{thm1}}

In view of (\ref{eq2}), we may restrict ourselves to $k\ge250$. Our proof uses two results on Tur\'an numbers for loose $k$-paths of length two and three. The first of them was proved by Keevash, Mubayi, and Wilson~\cite{KMW}.

\begin{lemma}\label{nsi}
	Let  $k\ge 4$ and  $H$ be a k-uniform hypergraph on $n$ vertices in which no two edges intersect on a single vertex.
 Then, for large $n$, $|H|\le {n-2\choose k-2}$.
\end{lemma}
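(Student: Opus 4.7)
My plan is to use the no-single-intersection hypothesis to decompose $H$ into components whose vertex sets are pairwise disjoint, bound each component by a classical extremal result for $2$-intersecting families, and then aggregate using superadditivity.

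\textbf{Step 1: Component decomposition.} Define a relation on $E(H)$ by $e\sim e'$ whenever $e\cap e'\ne\emptyset$, and let $\equiv$ denote its transitive closure. Since the hypothesis forbids $|e\cap e'|=1$, sharing any vertex forces sharing at least two, so each equivalence class $H_i$ is supported on a vertex set $V_i\subseteq V(H)$ and these vertex sets are pairwise disjoint across classes. Consequently $\sum_i|V_i|\le n$ and $|H|=\sum_i|H_i|$, and within each $H_i$ every pair of edges meets in at least two vertices.

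\textbf{Step 2: Bound within a component.} For each $H_i$ with $|V_i|$ large compared with $k$, the classical result of Frankl on $t$-intersecting families (with $t=2$) gives
$$|H_i|\le\binom{|V_i|-2}{k-2},$$
attained by the $2$-star consisting of all $k$-sets through a fixed pair. One concrete route is to fix any $v\in V_i$ and apply the Erd\H{o}s--Ko--Rado theorem to the $(k-1)$-uniform link $L_v^i$ (which is intersecting, since any two edges through $v$ share at least one more vertex), obtaining $\deg(v)\le\binom{|V_i|-2}{k-2}$; a Hilton--Milner-type stability argument then shows that a near-extremal link forces the whole component to concentrate on a fixed pair, yielding the same bound on $|H_i|$. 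Components whose vertex sets have size bounded in terms of $k$ contribute at most $O_k(1)$ edges each, which is negligible against $\binom{n-2}{k-2}$ once $n$ is large.

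\textbf{Step 3: Superadditive aggregation.} The function $f(x):=\binom{x-2}{k-2}$ is a polynomial in $x$ of degree $k-2\ge2$. Splitting an $(a+b-2)$-element ground set into parts of sizes $a-1$ and $b-1$ and counting $(k-2)$-subsets shows that
$$f(a)+f(b)\le\binom{a-1}{k-2}+\binom{b-1}{k-2}\le\binom{a+b-2}{k-2}=f(a+b),$$
so $f$ is superadditive on integers $x\ge2$. Iterating and using $\sum_i|V_i|\le n$,
$$|H|=\sum_{i=1}^m|H_i|\le\sum_{i=1}^m f(|V_i|)\le f\Bigl(\sum_{i=1}^m|V_i|\Bigr)\le f(n)=\binom{n-2}{k-2}.$$

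\textbf{Main obstacle.} The technical heart of the argument is Step 2: promoting the EKR bound on a single link (which a priori only controls one vertex's degree) to a bound on the total edge count of a pairwise $2$-intersecting component. The subtle point is excluding sporadic extremal configurations such as the families carried by a full $(k+1)$-set, and this is precisely where the assumption that $n$ (hence each relevant $|V_i|$) is sufficiently large is needed. The decomposition in Step 1 and the convexity in Step 3 are routine once Step 2 is in hand.
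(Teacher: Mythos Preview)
The paper does not prove this lemma; it is quoted as a result of Keevash, Mubayi, and Wilson \cite{KMW}, so there is no in-paper argument to compare against. I will therefore just assess your proposal on its own merits.

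Your Step 1 contains a genuine error. From the hypothesis you correctly deduce that any two \emph{intersecting} edges share at least two vertices, but you then assert that ``within each $H_i$ every pair of edges meets in at least two vertices.'' This does not follow: belonging to the same connectivity component only guarantees a chain of pairwise intersections, not that the two endpoints of the chain themselves intersect. For a concrete counterexample with $k=4$, take
\[
e_1=\{1,2,3,4\},\qquad e_2=\{3,4,5,6\},\qquad e_3=\{5,6,7,8\}.
\]
Here $|e_1\cap e_2|=|e_2\cap e_3|=2$ and $|e_1\cap e_3|=0$, so the family has no singleton intersection, all three edges lie in one component, yet $e_1$ and $e_3$ are disjoint. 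Thus a component need not be a $2$-intersecting family, and Frankl's theorem in Step 2 does not apply. In fact, chains like the one above can be extended to linear-size (in $n$) components that are very far from $2$-intersecting, so the reduction cannot be repaired by a small perturbation; the Keevash--Mubayi--Wilson argument proceeds quite differently.

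A secondary remark: even accepting Step 2, your treatment of small components is slightly loose. There can be up to $n/k$ components of bounded size, so their total contribution is $O_k(n)$, not $O_k(1)$; this is still $o\bigl(\binom{n-2}{k-2}\bigr)$ for $k\ge4$, so it is harmless here, but it would matter if you aimed for an exact rather than asymptotic bound.
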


The second result, due to F\"uredi, Jiang, and Seiver~\cite{FJS},
deals with the main object of our study: $\ppk$, the loose $k$-uniform path of length three.

\begin{lemma}\label{pk} Let $k\ge 3$ and  $H$ be a $\ppk$-free $k$-uniform hypergraph on $n$ vertices.
Then, for large $n$,  $|H|\le\binom {n-1}{k-1}$.
\end{lemma}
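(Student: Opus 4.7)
The plan is to argue by induction on $n$, with $k$ fixed and $n\ge n_0(k)$; for $n$ bounded in terms of $k$ the bound is trivial. Assume the statement for all smaller values and let $H$ be a $\ppk$-free $k$-graph on $n$ vertices.

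Pick a vertex $v$ of minimum degree in $H$. If $\deg_H(v)\le\binom{n-2}{k-2}$, then $H-v$ is $\ppk$-free on $n-1$ vertices, so by induction
\[
|H|=|H-v|+\deg_H(v)\le\binom{n-2}{k-1}+\binom{n-2}{k-2}=\binom{n-1}{k-1},
\]
and we are done. Hence it remains to rule out the case that every vertex of $H$ has degree strictly greater than $\binom{n-2}{k-2}$.

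In this high-minimum-degree regime the goal is to produce a $\ppk$ in $H$ directly, contradicting the hypothesis. Since $|H|>\binom{n-1}{k-1}$ greatly exceeds the bound $\binom{n-2}{k-2}$ of Lemma~\ref{nsi}, some two edges $e_1,e_2\in H$ intersect in exactly one vertex $v$, forming a loose $2$-path. To extend it to a $\ppk$ one seeks $e_3\in H$ with $|e_3\cap e_2|=1$ at some $w\in e_2\setminus\{v\}$ and $e_3\cap e_1=\emptyset$. Equivalently, $e_3\setminus\{w\}$ must be a $(k-1)$-set in the link of $w$ avoiding the set $S_w=(e_1\cup e_2)\setminus\{w\}$, which has at most $2k-2$ vertices. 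The number of $(k-1)$-subsets of $V\setminus\{w\}$ meeting $S_w$ is at most $(2k-2)\binom{n-3}{k-2}$, so an extension exists provided $\deg_H(w)$ exceeds this bound.

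The main obstacle is closing the gap between the induction threshold $\binom{n-2}{k-2}$ and the blocking budget of order $(2k-2)\binom{n-3}{k-2}$, a factor of roughly $2k-2$ apart. I would address this either by (i) selecting the initial $2$-path so that the extension vertex $w$ has degree well above the minimum, exploiting that $|H|>\binom{n-1}{k-1}$ forces many vertices to have degree significantly above $\binom{n-2}{k-2}$; or (ii) strengthening the induction with a more refined removal rule that handles moderate-degree vertices via a separate structural argument. Either route reduces the lemma to a clean counting estimate valid once $n$ is large in terms of $k$, matching the statement's "for large $n$" hypothesis.
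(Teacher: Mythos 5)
This lemma is not proved in the paper at all: it is quoted from F\"uredi, Jiang, and Seiver~\cite{FJS}, whose argument is substantially more involved than a one-step induction. So your attempt should be judged as a from-scratch proof, and it has two genuine gaps.

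The first is fatal: the base case of your induction does not exist. You assert that ``for $n$ bounded in terms of $k$ the bound is trivial,'' but in fact the bound is \emph{false} for small $n$. The complete $k$-graph $K^{(k)}_{3k-3}$ has only $3k-3<3k-2$ vertices, hence is $\ppk$-free, yet
\[
\binom{3k-3}{k}=\frac{3k-3}{k}\binom{3k-4}{k-1}>\binom{3k-4}{k-1}
\]
for every $k\ge 3$. This is exactly why the lemma carries the hypothesis ``for large $n$,'' and why the extremal number for small $n$ is governed by cliques rather than stars (cf.\ the lower bound construction~(\ref{eq1})). Consequently an induction on $n$ cannot start from small values; one must first establish the inequality at some large $n_0(k)$ directly, which is the whole content of the theorem, not a triviality. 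Your inductive step only transfers the bound upward once it holds, so without an independent argument at $n=n_0(k)$ the scheme is circular.

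The second gap you identify yourself and do not close: in the high-minimum-degree case, having $\deg_H(w)>\binom{n-2}{k-2}$ for all $w$ leaves a factor of about $2k-2$ between the degree guarantee and the number of edges through $w$ that could intersect $e_1\cup e_2$. Your proposed remedies (i) and (ii) are only sketches; in particular (i) would require showing that a constant fraction of vertices have degree $\Omega(k)\binom{n-2}{k-2}$, which does not follow merely from $|H|>\binom{n-1}{k-1}$ plus a min-degree lower bound. There is also a small technical issue that Lemma~\ref{nsi} as stated needs $k\ge4$, so the case $k=3$ of producing a loose $2$-path would need a separate argument. In short, the overall strategy (induct away low-degree vertices, then extend a loose $2$-path) is the natural first move, but it leaves the two hardest parts of the F\"uredi--Jiang--Seiver theorem unaddressed.
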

\noindent It is also proved in \cite{FJS} that, in fact, the only extremal $k$-graph is a full star.
Theorem \ref{thm1} is a direct consequence  of the following `stability' version of  Lemma \ref{pk} which states, roughly, that the structure of each
 $\ppk$-free dense $k$-graph is dominated by a giant star.

\begin{lemma}\label{l1}
For every $k\ge250$ and $n\ge n_0(k)$, each $\ppk$-free $k$-uniform hypergraph $H$,
has a vertex $v$ with degree
$$\deg_H(v)\ge |H|-{0.96}^k\binom {n-1}{k-1}.$$
\end{lemma}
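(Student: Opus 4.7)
The plan is to take a vertex $v$ of maximum degree in $H$ and, writing $T=\{e\in H:v\notin e\}$, to prove the equivalent inequality
\[
 |T|\le 0.96^k\binom{n-1}{k-1}
\]
by contradiction. Assuming $|T|>0.96^k\binom{n-1}{k-1}$, note that $T$ is itself a $P^{(k)}$-free $k$-graph on at most $n-1$ vertices, so I can split the argument on the intersection structure of $T$.

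If no two edges of $T$ share exactly one vertex, then Lemma~\ref{nsi} applied to $T$ (on $V\setminus\{v\}$) gives
\[
 |T|\le\binom{n-3}{k-2}=O(k/n)\cdot\binom{n-1}{k-1}.
\]
For $n\ge n_0(k)$ chosen so that $(k-1)/(n-2)<0.96^k$ --- that is, with $n_0(k)$ roughly exponential in $k$, say $n_0(k)\ge k\cdot 0.96^{-k}$ --- this is strictly less than $0.96^k\binom{n-1}{k-1}$, contradicting our assumption. Thus I may assume a \emph{cherry} exists in $T$: edges $f_1,f_2\in T$ with $f_1\cap f_2=\{u\}$ for some $u\ne v$.

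Exploiting the cherry, I would show the degrees of the $2(k-1)$ vertices of $W:=(f_1\cup f_2)\setminus\{u\}$ are restricted. Fixing $w\in f_2\setminus\{u\}$, any edge $e\in H$ with $w\in e$, $e\cap f_1=\emptyset$, and $e\cap f_2=\{w\}$ would complete the forbidden path $f_1,f_2,e$; hence every edge of $H$ through $w$ must contain another vertex of $f_1\cup(f_2\setminus\{w\})$, yielding $\deg_H(w)\le 2(k-1)\binom{n-2}{k-2}$. The symmetric bound holds for $w\in f_1\setminus\{u\}$. The plan is then to iterate this cherry-peeling: after removing $W$ one examines the residual $T$-edges on $V\setminus (W\cup\{v\})$ and reapplies the same dichotomy, either finishing via Lemma~\ref{nsi} on a smaller ground set or isolating a fresh cherry. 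Combining the accumulated per-vertex degree bounds with the Tur\'an bound from Lemma~\ref{pk} on the uncontrolled residual must ultimately drop $|T|$ below $0.96^k\binom{n-1}{k-1}$, delivering the contradiction.

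The main obstacle is the bookkeeping of this iteration. A single cherry imposes no restriction on the degree of the centre $u$, and the Tur\'an bound $\binom{n-2k}{k-1}$ for the residual after one peel is far too close to $\binom{n-1}{k-1}$ to close the argument directly. The correct iteration depth must balance the per-cherry contribution of roughly $4(k-1)^3/(n-1)\cdot\binom{n-1}{k-1}$ against the shrinkage factor $(1-2k/n)^{k-1}$ of the residual Tur\'an bound. Carrying out this balance is where the specific constants $0.96$ of Lemma~\ref{l1} and $250$ of Theorem~\ref{thm1} enter; they are calibrated precisely so that, for $k\ge 250$ and $n\ge n_0(k)$ sufficiently large in $k$, the two competing exponential decays conspire to produce the required strict inequality.
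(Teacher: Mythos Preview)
Your proposal has a genuine gap: the cherry-peeling iteration cannot close, and this is not merely a matter of bookkeeping. Each peel removes a set $W$ of $2(k-1)$ vertices and accounts for at most $4(k-1)^2\binom{n-2}{k-2}=\frac{4(k-1)^3}{n-1}\binom{n-1}{k-1}$ edges of $T$, while shrinking the residual Tur\'an bound by the factor $\bigl(1-\tfrac{2(k-1)}{n-1}\bigr)^{k-1}$. To drive the residual below $0.96^k\binom{n-1}{k-1}$ you need on the order of $m\approx 0.02\,n/k$ peels, and then the accumulated peeled contribution is about
\[
m\cdot\frac{4(k-1)^3}{n-1}\binom{n-1}{k-1}\ \approx\ 0.08\,(k-1)^2\binom{n-1}{k-1},
\]
which for $k\ge250$ is astronomically larger than the target $0.96^k\binom{n-1}{k-1}$. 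So the two ``competing exponential decays'' you invoke do \emph{not} conspire in your favour; the constants $0.96$ and $250$ are calibrated for a completely different mechanism. A second symptom that something is missing: your argument never uses the maximality of $\deg_H(v)$, yet without it the conclusion is plainly false --- $T$ could itself be a full star on $n-1$ vertices, with $|T|=\binom{n-2}{k-1}\gg 0.96^k\binom{n-1}{k-1}$.

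The paper's proof proceeds along entirely different lines. It first shows (Proposition~\ref{relaxed}) that any $\ppk$-free $H$ with $|H|\ge 0.96^k\binom{n-1}{k-1}$ must satisfy $\Delta(H)\ge 0.9^k\binom{n-1}{k-1}$; the argument works in the shadow $F$ of $H$, uses a random bipartition $V=U_1\cup U_2$ to produce many ``proper'' shadow sets, and then builds a rainbow $(k-1)$-path of length three that extends to a forbidden $\ppk$. Once a vertex $v$ with $\deg_H(v)\ge 0.9^k\binom{n-1}{k-1}$ is in hand, a separate structural step (Proposition~\ref{f2}) analyses the link $H(v)$: a dense subhypergraph $F\subseteq H(v)$ is found, and one shows directly that every edge of $H$ not through $v$ must avoid $V(F)$, whence the edges missing $v$ live on at most $\bigl(1-(b/(k-1))^{1/(k-2)}\bigr)(n-1)$ vertices and are bounded by Lemma~\ref{pk}. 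Plugging in $b=0.9^k$ yields the factor $0.96^k$.
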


We defer the proof of Lemma~\ref{l1} to the next section. Here we show how Theorem~\ref{thm1} follows from it.

\begin{proof}[Proof of Theorem~\ref{thm1}] For a given $k\ge 250$ and $A=250$
 let $r\ge r_k$, where $r_k$ is chosen so  that $250r_k\ge n_0(k)$ with
$n_0(k)$ defined as in  Lemma~\ref{l1}.
Suppose that the complete $k$-graph $K:=K_{Ar}^{(k)}$ on $Ar$ vertices  is colored with  colors $1,2,\dots,r$ in such a way that  no monochromatic $\ppk$ emerges.  For every color $c$  choose (possibly with repetitions) a vertex $v_c$ with maximum degree in this color and let $R=\{v_c:\; c=1,2,\dots,r\}$.

Consider now the complete $k$-graph $H$ obtained from $K$ by
 removing all vertices in $R$. We have
$|V(H)|\ge Ar-|R|\ge (A-1)r$ and thus  $|E(H)|\ge\binom {(A-1)r}{k}$. On the other hand, by applying Lemma~\ref{l1} to each color class,  we have
$|H|\le r(0.96)^k\binom {Ar-1}{k-1}$. On the other hand, since $k\ge A= 250$,  we have
\begin{equation}\label{A<A} r(0.96)^k\binom {Ar-1}{k-1}< \binom {(A-1)r}{k}\,,\end{equation}
a contradiction.
To see (\ref{A<A}), observe first that the two sides of (\ref{A<A}) are asymptotic (as $r$ is growing) to, respectively, $0.96^kA^{k-1} r^k/(k-1)!$ and $(A-1)^kr^k/k!$.
Thus it remains to show that $(A-1)^k>k(0.96)^kA^{k-1}$, or, equivalently, $(A-1)(1-1/A)^{k-1}>k(0.96)^k$. Now it is enough to observe that  $1-1/A\ge 0.99$ for $A\ge 250$
and $k<(99/96)^k$ for $k\ge167$.
\end{proof}

\section{Proof of Lemma~\ref{l1}}

We begin by stating two elementary facts the short proofs of which are provided for completeness.

\begin{fact}\label{f1}
Every hypergraph $H$  contains a sub-hypergraph $G$ with minimum degree greater than $\frac{|E(H)|}{|V(H)|}$.
\end{fact}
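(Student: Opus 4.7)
The plan is to use a greedy vertex-deletion argument. Set $d := |E(H)|/|V(H)|$. Starting from $H_0 = H$, as long as the current sub-hypergraph $H_i$ contains a vertex $v$ with $\deg_{H_i}(v) \le d$, delete $v$ (together with all edges through it) to form $H_{i+1}$. When this process halts, every remaining vertex has degree strictly greater than $d$, so the final sub-hypergraph $G$ is the object we seek --- provided it is nonempty.

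The key step is the invariant that $|E(H_i)|/|V(H_i)| \ge d$ is maintained throughout. Since the deleted vertex has degree at most $d$, one has $|E(H_{i+1})| \ge |E(H_i)| - d$ and $|V(H_{i+1})| = |V(H_i)| - 1$, and a short algebraic calculation gives
\[
\frac{|E(H_i)| - d}{|V(H_i)| - 1} \ge \frac{|E(H_i)|}{|V(H_i)|}
\]
whenever $|E(H_i)|/|V(H_i)| \ge d$. An induction on $i$ then yields the invariant.

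To conclude, note that the invariant prevents the process from exhausting all vertices: emptying $V$ would force $|E|/|V| = 0$, which is incompatible with the invariant as soon as $|E(H)| > 0$ (the case $|E(H)| = 0$ being vacuous). Hence the process must halt at some nonempty $G = H_t$, in which every vertex has degree greater than $d = |E(H)|/|V(H)|$, as desired. The only point that demands attention is the brief ratio computation in the inductive step; beyond that, there is no real obstacle.
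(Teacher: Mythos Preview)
Your proof is correct and follows essentially the same idea as the paper's: both rest on the observation that removing a vertex of degree at most $|E|/|V|$ does not decrease the edge-to-vertex ratio. The paper packages this as ``take the sub-hypergraph maximizing $|E(G)|/|V(G)|$ with fewest vertices and note it can have no low-degree vertex,'' while you run the equivalent greedy deletion process; the underlying inequality and the level of rigor are the same.
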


\begin{proof}
	Define $G$ as a subhypergraph of $H$ which maximizes the ratio $\frac{|E(G)|}{|V(G)|}$ and has the smallest number of vertices.
If for some $v\in V(G)$,  $\deg_G(v)\le\frac{|E(H)|}{|V(H)|}$, then
$$
\frac{|E(G-v)|}{|V(G-v)|}\ge\frac{|E(G)|-|E(H)|/|V(H)|}{|V(G)|-1}\ge \frac{|E(G)|}{|V(G)|},
$$
which contradicts our choice of $G$.
\end{proof}

\begin{fact}\label{f2}
Every bipartite graph $B$ with vertex classes $V_1$ and $V_2$  contains a subgraph $G$ with $\deg_G(v)\ge |B|/(2|V_i|)$ for every vertex $v\in V(G)\cap V_i$, $i=1,2$.
\end{fact}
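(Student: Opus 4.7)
The plan is to mimic the iterative-deletion idea behind Fact~\ref{f1}, but with two side-dependent thresholds. Starting with $G:=B$, as long as there exists a vertex $v\in V(G)\cap V_i$ (for some $i\in\{1,2\}$) with $\deg_G(v)<|B|/(2|V_i|)$, I would delete such a vertex $v$ together with its incident edges from $G$. The process terminates after finitely many steps, and what remains is a subgraph whose every vertex on side $V_i$ satisfies the desired lower bound. The only thing that needs checking is that the process does not exhaust all edges, i.e.\ that the final $G$ is nonempty.

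To see this, I would bound the total number of deleted edges side by side. Each deletion of a vertex $v\in V_i$ removes strictly fewer than $|B|/(2|V_i|)$ edges, and at most $|V_i|$ vertices can be deleted from side $V_i$ during the whole procedure. Consequently, the total number of removed edges is strictly less than
\[
|V_1|\cdot\frac{|B|}{2|V_1|}+|V_2|\cdot\frac{|B|}{2|V_2|}=|B|,
\]
so at least one edge of $B$ survives, and in particular the final graph $G$ contains at least one vertex on each side. By construction every vertex of $G$ on side $V_i$ then has degree at least $|B|/(2|V_i|)$, as required.

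I do not foresee any genuine obstacle: the argument is a direct bipartite analogue of the proof of Fact~\ref{f1}, and the only minor point to keep straight is that the thresholds $|B|/(2|V_i|)$ are fixed constants referring to the \emph{original} class sizes $|V_i|$, not to the sizes of any intermediate subgraph; this is precisely what makes the two sums above evaluate to $|B|/2$ each. One could alternatively phrase the proof, as in Fact~\ref{f1}, by choosing $G$ to maximize some weighted edge-to-vertex ratio, but the iterative-deletion version is more transparent here and avoids any ambiguity in the choice of weights.
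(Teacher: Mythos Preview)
Your proposal is correct and matches the paper's own proof essentially verbatim: the paper also deletes vertices one by one whose current degree falls below the fixed threshold $|B|/(2|V_i|)$ and bounds the total number of removed edges by $|V_1|\cdot|B|/(2|V_1|)+|V_2|\cdot|B|/(2|V_2|)=|B|$ to conclude that the final subgraph is nonempty. Your remark that the thresholds refer to the original class sizes is exactly the point that makes the estimate work.
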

\begin{proof}
Let us  remove one by one the vertices with (current) degree smaller than the above bounds.
	Then, by the time the degrees of all remaining vertices satisfy the required bounds,
	we remove fewer than
$$|V_1|\times |B|/(2|V_1|)+|V_2|\times |B|/(2|V_2|)=|B|$$ edges, and so the final subgraph $G$ is non-empty.
\end{proof}


Lemma~\ref{l1} is a straightforward consequence of the following two propositions.

\begin{prop}\label{f2}
For all $k\ge3$, $b>0$, and sufficiently large $n$, the following holds. Let $H$ be a $\ppk$-free, $n$-vertex, $k$-uniform hypergraph and let, for some $v\in V(H)$, $\deg_H(v)\ge b\binom{n-1}{k-1}$. Then,
$$\deg_H(v)\ge|H|-\left(1-\left(\frac b{k-1}\right)^{1/(k-2)}\right)^{k-1}\binom {n-1}{k-1}.$$
\end{prop}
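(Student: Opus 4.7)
The plan is to combine a short structural observation with Fact~\ref{f1} and then to invoke Lemma~\ref{pk}. Write $L=\bigl\{T\in\binom{V(H)\setminus\{v\}}{k-1}:\{v\}\cup T\in H\bigr\}$ for the link of $v$, so that $|L|=\deg_H(v)\ge b\binom{n-1}{k-1}$, and let $H'=\{e\in H:v\notin e\}$, so $|H'|=|H|-\deg_H(v)$. With $c=(b/(k-1))^{1/(k-2)}$, the target is $|H'|\le(1-c)^{k-1}\binom{n-1}{k-1}$.

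First I would establish the cross-intersection constraint $|T\cap e|\ne 1$ for every $T\in L$ and $e\in H'$. Indeed, if $T\cap e=\{u\}$ and $T_1\in L$ is disjoint from $T\cup e$, then $\{v\}\cup T_1$, $\{v\}\cup T$, $e$ form a copy of $\ppk$, with middle edge $\{v\}\cup T$ and linking vertices $v$ and $u$. Because $|T\cup e|=2k-1$, the number of $(k-1)$-subsets of $V\setminus\{v\}$ meeting $T\cup e$ is $\binom{n-1}{k-1}-\binom{n-2k+1}{k-1}=O(k^2/n)\binom{n-1}{k-1}$, which is smaller than $|L|$ once $n\ge n_0(k,b)$; so such a $T_1$ exists, producing a $\ppk$ in $H$ and a contradiction. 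An immediate consequence is that every $u\in V(H')$ obeys
$$\deg_L(u)\le\binom{n-2}{k-2}-\binom{n-k-1}{k-2}\le\frac{(k-1)(k-2)}{n-k}\binom{n-2}{k-2},$$
since any $T\in L$ containing $u$ must meet $e\setminus\{u\}$ for some (equivalently, any) $e\in H'$ with $u\in e$.

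Next I would apply Fact~\ref{f1} to the $(k-1)$-graph $L$ to extract a sub-hypergraph $L^*\subseteq L$ with
$$\delta(L^*)>\frac{|L|}{n-1}\ge\frac{b}{k-1}\binom{n-2}{k-2}.$$
Writing $V^*=V(L^*)$, every $u\in V^*$ satisfies $\binom{|V^*|-1}{k-2}\ge\deg_{L^*}(u)>(b/(k-1))\binom{n-2}{k-2}$. The elementary comparison $\binom{m}{k-2}/\binom{n-2}{k-2}\le(m/(n-2))^{k-2}$, valid for $m\le n-2$, converts this into $|V^*|-1>c(n-2)$. Combined with the degree bound from the previous paragraph, for $n\ge n_0(k,b)$ no vertex of $V^*$ can lie in $V(H')$, hence $V^*\cap V(H')=\emptyset$ and
$$|V(H')|\le(n-1)-|V^*|\le(1-c)(n-2).$$

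Finally, $H'$ is $\ppk$-free, so Lemma~\ref{pk} applied on the support of $H'$ gives
$$|H'|\le\binom{|V(H')|-1}{k-1}\le\binom{(1-c)(n-1)}{k-1}\le(1-c)^{k-1}\binom{n-1}{k-1},$$
the last inequality using the same elementary comparison with exponent $k-1$. The main difficulty I anticipate is book-keeping of the threshold $n_0(k,b)$: the argument requires the ratios $\binom{n-k-1}{k-2}/\binom{n-2}{k-2}$ and $\binom{n-2k+1}{k-1}/\binom{n-1}{k-1}$ to be close to~$1$ simultaneously, and Lemma~\ref{pk} must still apply on the possibly shrunken support of $H'$. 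The genuinely structural ingredient is the short $\ppk$-construction of the second paragraph; everything else is elementary counting together with Fact~\ref{f1}.
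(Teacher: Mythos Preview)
Your proof is correct and follows essentially the same route as the paper's: extract a high-minimum-degree sub-hypergraph of the link via Fact~\ref{f1}, show that the support of $H'$ misses its vertex set, bound $|V(H')|$, and finish with Lemma~\ref{pk}. The only difference is the order of the two middle steps: you first prove the global cross-intersection constraint $|T\cap e|\ne1$ (using only that $|L|$ is large, to locate $T_1$) and then deduce $V^*\cap V(H')=\emptyset$ by comparing your degree bound on $\deg_L(u)$ with $\delta(L^*)$, whereas the paper passes to the dense sub-link $F$ first and uses the high degree at a hypothetical $w\in e\cap V(F)$ to manufacture the same three-edge $\ppk$ directly.
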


\begin{proof} Let $H(v)$ be the link of $v$ in $H$, that is, the $(k-1)$-uniform, $(n-1)$-vertex hypergraph consisting of all $(k-1)$-element subsets of $V(H)$ which together with $v$ form edges in $H$. Note that $|H(v)|=\deg_H(v)$. Fact \ref{f1} implies that there is a subgraph $F$ of $H(v)$ with minimum degree
$$\delta(F)\ge\delta:=\frac b{k-1}\binom {n-2}{k-2}.$$

\begin{claim}\label{mm}
The number of vertices $|V(F)|$ of $F$ is bounded from below by
\begin{equation*}\label{m}
|V(F)|\ge \left(\frac b{k-1}\right)^{1/(k-2)}(n-1).
\end{equation*}
\end{claim}

\begin{proof}
Since
$$\binom{|V(F)|}{k-1}\ge |F|\ge |V(F)|\frac b{(k-1)^2}\binom{n-2}{k-2},$$
it follows that
$$\binom{|V(F)|-1}{k-2}\ge  \frac b{k-1}\binom{n-2}{k-2},$$
so,
$$1\ge \frac b{k-1}\frac{(n-2)_{k-2}}{(|V(F)|-1)_{k-2}}>\frac b{k-1}\left(\frac{n-1}{|V(F)|}\right)^{k-2},$$
which implies the required bound for $|V(F)|$.
\end{proof}

\begin{claim}\label{disjoint}
Let $n$ be sufficiently large. For every edge $e\in H$, either $v\in e$ or $e\cap V(F)=\emptyset$.
\end{claim}

\proof Suppose there exists an edge $e\in H$ such that $v\not\in e$ and $e\cap V(F)\neq\emptyset$. Let $w\in e\cap V(F)$. Since $\deg_F(w)\ge \delta=\Omega(n^{k-2})$ while the number of edges of $F$ intersecting $e$ on at least two vertices is $O(n^{k-3})$, there is an edge $f'\in F$ such that $e\cap f'=\{w\}$. Further, since $\deg_H(v)\ge b\binom{n-1}{k-1}$ while the number of edges of $H$ containing $v$ and intersecting $e\cup f'$ is $O(n^{k-2})$, there is an edge $h\in H$ such that $v\in h$ and $h\cap(e\cup f')=\emptyset$. The edges $e$, $f'\cup\{v\},$ and $h$ form a copy of $\ppk$ in $H$, a contradiction. \qed

\bigskip

In view of Claim \ref{disjoint}, to complete the proof of Proposition \ref{f2}, we bound from above
the number of edges of $H$ which do not contain $v$  by $|H-V(F)|$, where $H-V(F)$ is the induced subhypergraph of $H$ obtained by deleting all vertices of $F$. Since $H$, and thus $H-V(F)$, is $\ppk$-free, we can bound $|H-V(F)|$ by the Tur\'an number for $\ppk$ given in Lemma \ref{pk}.
Using the bound for $|V(F)|$ given by Claim~\ref{mm}, we get
\begin{align*}
|H-V(F)|&\le\binom {n-|V(F)|-2}{k-1}\\
&\le \binom{n-(n-1)(b/(k-1))^{1/(k-2)}-2}{k-1}\\
&<\binom{(n-1)(1-(b/(k-1))^{1/(k-2)})}{k-1}\\
&\le \left(1-\left(\frac b{k-1}\right)^{1/(k-2)}\right)^{k-1}\binom {n-1}{k-1}\,.\qed
\end{align*}
\renewcommand{\qed}{}
\end{proof}

\begin{prop}\label{relaxed} For all $k\ge250$  and sufficiently large $n$ the following holds.
If $H$ is a $\ppk$-free $k$-graph on $n$ vertices and $|H|\ge 0.96^k \binom {n-1}{k-1}$, then $\Delta(H)\ge0.9^k \binom {n-1}{k-1}$.
\end{prop}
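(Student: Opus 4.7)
The plan is to argue by contradiction: assume $H$ is $\ppk$-free with $|H|\ge 0.96^k\binom{n-1}{k-1}$ and $\Delta(H)< 0.9^k\binom{n-1}{k-1}$, and aim to derive a copy of $\ppk$ in $H$. The strategy is to pivot on an \emph{edge} rather than on a vertex, in analogy with the proof of Proposition~\ref{f2}: a carefully chosen middle edge $e_2$ plays the role of the centre, with the other two edges of the path drawn from the set of singly-intersecting partners of $e_2$ attached at two distinct vertices of $e_2$.

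First, I would locate the pivot edge. Lemma~\ref{nsi} asserts that a $k$-graph in which no two edges meet in exactly one vertex has at most $\binom{n-2}{k-2}$ edges, and for large $n$ this is negligible compared to $0.96^k\binom{n-1}{k-1}$, so almost every edge of $H$ participates in many singly-intersecting pairs. Combining Lemma~\ref{nsi} with a second-moment / codegree analysis (comparing the total number $P$ of singly-intersecting pairs to $\tfrac12\sum_v\binom{\deg_H(v)}{2}$, using the cap $\Delta(H)<0.9^k\binom{n-1}{k-1}$ to bound codegree corrections, and if necessary passing first to a sub-hypergraph of high minimum degree via Fact~\ref{f1}), I expect to produce an $e_2\in H$ for which
\[
D(e_2):=\{f\in H : |f\cap e_2|=1\}
\]
has size substantially exceeding $0.9^k\binom{n-1}{k-1}$.

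Next, partition $D(e_2)=\bigsqcup_{v\in e_2} D_v(e_2)$ by the unique intersection vertex. Since $|D_v(e_2)|\le\deg_H(v)<0.9^k\binom{n-1}{k-1}$ for every $v$, the lower bound on $|D(e_2)|$ forces its mass to spread over at least two distinct vertices $x\ne y\in e_2$ with both $|D_x(e_2)|$ and $|D_y(e_2)|$ sizeable. The number of bad pairs $(e_1,e_3)\in D_x(e_2)\times D_y(e_2)$ with $e_1\cap e_3\ne\emptyset$ is then bounded by
\[
\sum_{z\in V(H)\setminus e_2}|\{e_1\in D_x(e_2):z\in e_1\}|\cdot|\{e_3\in D_y(e_2):z\in e_3\}|\le n\Delta(H)^2,
\]
which is much smaller than $|D_x(e_2)|\cdot|D_y(e_2)|$ for $n$ large. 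Any good pair then yields a $\ppk$ with middle edge $e_2$ and intersection vertices $x,y$, a contradiction.

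The main obstacle is the quantitative step of producing a sufficiently rich pivot edge $e_2$: a direct averaging only gives $|D(e_2)|\ge 2P/|H|$, where the crude lower bound $P\ge|H|-\binom{n-2}{k-2}$ is merely linear in $|H|$ and far too weak. Upgrading $|D(e_2)|$ to be exponentially large in $k$ is where the available slack $(0.96/0.9)^k=(16/15)^k$ is spent: one loses a factor of $k$ in the pigeonhole step distributing among vertices of $e_2$, and must leave additional room to beat the disjointness estimate above. Balancing all three losses against this single exponential budget is the delicate part of the argument.
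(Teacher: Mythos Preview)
Your disjointness estimate is the wrong way round. Since every $e_1\in D_x(e_2)$ contains $x$ and every $e_3\in D_y(e_2)$ contains $y$, you have $|D_x(e_2)|\le\deg_H(x)\le\Delta(H)$ and likewise $|D_y(e_2)|\le\Delta(H)$, so
\[
|D_x(e_2)|\cdot|D_y(e_2)|\;\le\;\Delta(H)^2\;<\;n\,\Delta(H)^2
\]
for every $n>1$; your bound on bad pairs is therefore never smaller than the total number of pairs, and the argument stalls. A sharper bound is available: $|\{e_1\in D_x(e_2):z\in e_1\}|\le d_H(x,z)\le\binom{n-2}{k-2}$, and $\sum_z|\{e_3\in D_y(e_2):z\in e_3\}|=(k-1)|D_y(e_2)|$, so the number of bad pairs is at most $(k-1)\binom{n-2}{k-2}\,|D_y(e_2)|=O(n^{k-2})|D_y(e_2)|$. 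This \emph{is} beaten by $|D_x(e_2)|\,|D_y(e_2)|$ once $|D_x(e_2)|$ is of order $n^{k-1}$, so the final step can be repaired --- but not with the inequality you wrote.

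Even granting that repair, the heart of the matter --- producing a pivot edge $e_2$ whose singly-intersecting neighbours spread over two vertices of $e_2$ with weight of the right order --- is left as a sketch. Lemma~\ref{nsi} only guarantees that \emph{some} singly-intersecting pair exists; turning this into a lower bound on $\max_{e_2}|D(e_2)|$ of order $\binom{n-1}{k-1}$ via ``second-moment / codegree analysis'' is not routine, and you have not said what the second moment is over or how the cap on $\Delta(H)$ enters. The paper avoids this obstacle by a different route: it passes to the $(k-1)$-uniform shadow $F$ of $H$, first proving $|F|\ge|H|/4$ (this is where the assumption $\Delta(H)<0.9^k\binom{n-1}{k-1}$ is genuinely used, in tandem with Lemma~\ref{nsi}), then randomly splits $V=U_1\cup U_2$, attaches to each $f\in F$ with $f\subseteq U_2$ an extension vertex $v_f\in U_1$, partitions $U_1$ greedily into three balanced parts $W_1,W_2,W_3$, and builds a \emph{rainbow} loose $(k-1)$-path $f_2,f_1,f_3$ with $v_{f_i}\in W_i$; the edges $\{v_{f_i}\}\cup f_i$ then form $\ppk$. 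This shadow-and-colouring scheme is structurally quite different from your pivot-edge plan.
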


\begin{proof}
  Let $H$ be a $\ppk$-free $k$-graph on $n$ vertices and with $|H|\ge 0.96^k \binom {n-1}{k-1}$.
By $F$ we denote the shadow of $H$, i.e.
 $$F=\{ f\in [n]^{k-1}: f\subset e \textrm{\ for some\ }e\in H\}\,. $$

Let us now suppose that $\Delta(H)<0.9^k \binom {n-1}{k-1}$. We shall show that this assumption
leads to a contradiction.

The main idea of the argument goes roughly as follows.
First  we deal with the case when $F$ is small (Claim~\ref{cl3} below). Then there are many
vertices $v$ with large  links. Consequently, it is enough to find in $F$ a loose
$(k-1)$-path of
length two, say $f_1$, $f_2$ (and for that, due to Lemma~\ref{nsi} we only require that $|F|=\Omega(n^{k-3})$) and find another $f_3$ in $F$ so that $(f_1\cup f_2)\cap f_3=\emptyset$.
Then, for some $v_1,v_2\in V(H)$, the edges $\{v_1\}\cup f_1$,
$\{v_2\}\cup f_2$, and $\{v_2\}\cup f_3$ form a $\ppk$ in $H$.
In the case  $F$ is large we select the   three disjoint subsets of vertices, $W_1$, $W_2$ and $W_3$,
such that the unions $S_i$
of the links   of vertices in $W_i$ are edge-disjoint  and roughly of the same size, for each $i=1,2,3$.
Since $F$ is large, so are all $S_i$'s; in fact each of them
covers a majority of the vertices of $H$. Thus, one can find a $(k-1)$-path of length three
consisting of some sets $f_1$, $f_2$, $f_3$, where $f_i\in S_i$ for $i=1,2,3$, which in turn,
can be easily extended  to~$\ppk$.

In order to make the above precise, let us start with the following observation.

  \begin{claim}\label{cl3}
$|F|\ge \frac14|H|$.
\end{claim}
\proof
  Let us consider an auxiliary bipartite graph $B$, with vertex classes $V(H)$ and $F$, and with edge set
  $$\{\{v,f\}:\,\{v\}\cup f\in H\}\,.$$
 Clearly, $|B|=k|H|$.
 Further, define
 $$F'=\{f\in F: |\{e\in H: f\subset e\}|\ge 2k\}\,$$
 and  observe that $|F'|\le \binom{n-1}{k-2}$. Indeed, otherwise, by the Tur\'an number for $P^{(k-1)}$, $F'$ would contain a copy of $P^{(k-1)}$ which could be
easily  extended to a copy of $\ppk$ in $H$.

Let $B'$ be the subgraph of $B$ consisting of all edges with one endpoint in $F'$. We have
$$|B|=\sum_f\deg_B(f)\le |B'|+(|F|-|F'|)2k,$$
so
$$|F|\ge|F|-|F'|\ge\frac1{2k}(|B|-|B'|).$$
Thus, recalling that $|B|=k|H|$, to complete the proof of Claim \ref{cl3}, it suffices to show that
\begin{equation}\label{EE}
|B'|\le |B|/2.
\end{equation}
Suppose that $|B'|\ge |B|/2$. Then,
$$|B'|\ge \frac k2|H|\ge \frac k2(0.96)^k\binom{n-1}{k-1}.$$
We apply Fact \ref{f2} to $B'$, obtaining a subgraph $B''$ with vertex sets $V_1\subset V(H)$ and $F''\subset F'$
such that each vertex $v\in V_1$ has in $B''$ degree at least $\tfrac14(0.96)^k \binom {n-1}{k-2}$ and each $f\in F''$ has in $B''$ degree at least $\tfrac14(0.96)^k n$.

Since, for large $n$, $|F''|\ge\tfrac14(0.96)^k \binom {n-1}{k-2} > {n-2\choose k-3}$, by Lemma \ref{nsi}, $F''$ contains two $(k-1)$-sets $f_1$, $f_2$
such that $|f_1\cap f_2|=1$.
Let $N_i$ be the neighborhood of $f_i$ in $B''$, $i=1,2$. If there was an edge $(v,f)\in B''$ with  $v\in N_1\cup N_2$ (say, $v\in N_2$) and $(f_1\cup f_2)\cap f=\emptyset$, then the $k$-sets $\{v_1\}\cup f_1$, $\{v\}\cup f_2$, and $\{v\}\cup f$, where $v_1\in N_1$, $v_1\neq v$, would form a copy of $\ppk$ in $H$, a contradiction. Thus, in $B''$, all neighbors $f$ of vertices  in $N_1\cup N_2$ must intersect $f_1\cup f_2$. Since $|N_1\cup N_2|\ge |N_1|\ge \tfrac14(0.96)^k n$, the number of edges of $B''$ leaving $N_1\cup N_2$ is at least
$$\frac14(0.96)^k n\times \frac14 0.96^k \binom {n-1}{k-2}=\frac1{16}(0.96)^{2k}n\binom {n-1}{k-2}.$$
Each of these edges of $B''$ represents an edge of $H$ which intersects $f_1\cup f_2$, a set of size smaller than $2k$. Hence, by averaging, there exists a vertex in $f_1\cup f_2$  belonging to at least
$$\frac1{32k}(0.96)^{2k}n\binom {n-1}{k-2}>0.9^k\binom{n-1}{k-1}$$
of these edges (note that the last inequality is valid for  $k\ge250$). This contradicts our assumption on  $\Delta(H)$ and, therefore, completes the proof of Claim \ref{cl3}. \qed

\bigskip

To continue with the proof of Proposition \ref{relaxed}, for every $f\in F$
we choose just one vertex $v_f$ such that $\{v_f\}\cup f\in H$. Observe that by our assumption on $\Delta(H)$, for each $v\in V(H)$,
\begin{equation}\label{vf}
|\{f\in F:\; v=v_f\}|<0.9^k\binom{n-1}{k-1}.
\end{equation}
 Further,
we split the vertex set $V(H)$ randomly into two parts, $U_1$ and $U_2$, where each vertex belongs to $U_1$ independently with probability $1/k$.
We call a set $f\in F$ {\sl proper} if $v_f\in U_1$ and $f\subseteq U_2$.

Let $X$  count the number of proper sets. Since
 $$
	P(f \text{ is proper})=\frac{1}{k}\cdot\left(\frac{k-1}{k}\right)^{k-1} \ge\frac{1}{k}\cdot\frac{1}{e}>\frac{1}{3k},
	$$
 by Claim~\ref{cl3},
$$
EX=\sum_{f\in F} P(f \text{ is proper})=|F|\cdot P(f \text{ is proper})\ge \frac{0.96^k}{12k}\binom {n-1}{k-1}.
$$
Thus, there exists a partition $(U_1,U_2)$ such that the number of proper sets $f$ is at least $\tfrac{0.96^k}{12k}\binom {n-1}{k-1}$.
For each $v\in U_1$, set
$$F_v=\{f\in F:\; v=v_f\mbox{ and }f\subset U_2\}\quad\mbox{and}\quad \phi_v=\frac{|F_v|}{\binom {n-1}{k-1}}.$$

By (\ref{vf}) and the above lower bound on the number of proper sets~$f$, we have $\sum_{v\in U_1}\phi_v>0.96^k/(12k)$ and, for each $v$, we have also $\phi_v<0.9^k$.
We partition the set $\{v\in U_1:\; F_v\neq\emptyset\}$ into three subsets $W_1, W_2, W_3$ so that the sums $S_i:=\sum_{v\in W_i}\phi_v$, $i=1,2,3$, are as close to each other as possible. This can be done, for instance, by a greedy algorithm which places  the vertices one after another into the set with the current minimum total of $\phi_v$'s. Then, assuming that $S_1\le S_2\le S_3$, we have
$$S_1>S_3-0.9^k\ge\frac13(S_1+S_2+S_3)-0.9^k\ge\frac14(S_1+S_2+S_3),$$
provided
$$0.9^k<\frac1{12}\times \frac1{12k}0.96^k=\frac1{144k}0.96^k\le \frac1{12}(S_1+S_2+S_3),$$
which is valid for $k\ge250$.
Hence, for each $i=1,2,3$,
$$\sum_{v\in W_i}|F_v|\ge \frac{0.96^k}{48k}\binom {n-1}{k-1}.$$

The sets $W_1,W_2,W_3$ generate a corresponding partition of the proper sets $f$ into `colors' $C_i=\bigcup_{v\in W_i}F_v.$
In order to complete the proof of Proposition \ref{relaxed}, it suffices to show that such a 3-coloring
contains a $(k-1)$-path of length three whose edges are colored with different colors. Such a path can be extended to a copy of $\ppk$ in $H$, yielding a contradiction.

Howeover, all sets $W_i$ are so dense that the existence of such a path is
 an easy consequence of Fact~\ref{f1}. Indeed, recall that in each color there are at least $0.96^k/(48k)\binom {n-1}{k-1}$ edges. Therefore, by Fact~\ref{f1},
in each color $C_i$, $i=1,2,3$, viewed as a $(k-1)$-graph, one can find a sub-hypergraph $G_i$ with
$$\delta(G_i)\ge\frac{0.96^k}{48k^2}\binom {n-2}{k-2}.$$ Moreover, $|V(G_i)|\ge 0.9n$, since otherwise for each vertex $v\in V(G_i)$,
$$
\deg_{G_i}(v)\le {0.9n\choose k-2}<\frac{0.9^{k-2}n^{k-2}}{(k-2)!}<\frac{0.96^k}{48k^2}\binom{n-2}{k-2}\le \delta_{G_i},
$$
where the penultimate inequality holds for $k\ge250$.
Consequently, the intersection of the vertex sets of these three
graphs, $U:=V(G_1)\cap V(G_2)\cap V(G_3)$, has size $|U|\ge 0.7n$.

Fix a vertex $v\in U$. Since $\deg_{G_1}(v)\ge\tfrac{0.96^k}{48k^2}\binom {n-2}{k-2}$ and the number of edges of $G_1$ with $f\cap U=\{v\}$ is at most $\binom{0.3n}{k-2}$, there exists an edge $f_1\in G_1$ and a vertex $w\in U$, $w\neq v$, such that $\{v,w\}\subset f_1\cap U$.  Moreover, since the number of edges of $G_2$ containing $v$ and another vertex of $f_1$ is $O(n^{k-3})$, we can find $f_2\in G_2$ such that $f_1\cap f_2=\{v\}$. Similarly, there exists  $f_3\in G_3$ such that
$f_3\cap (f_1\cup f_2)=\{w\}$. Then the  edges $f_2$, $f_1$, and $f_3$  form a desired copy of $P^{(k-1)}$ in~$F$. Finally, the edges $\{v_{f_i}\}\cup f_i$, $i=1,2,3$, create a $k$-path $\ppk$ in $H$, a contradiction.
\end{proof}

\bigskip

\begin{proof}[Proof of Lemma~\ref{l1}]

If $|H|<0.96^k\binom {n-1}{k-1}$ then the assertion obviously holds. Let us assume that $|H|\ge 0.96^k\binom {n-1}{k-1}$. Then, by Proposition \ref{relaxed}, there exists a vertex $v\in V(H)$ with
	$$
	\deg_H(v)\ge 0.9^k{n-1 \choose k-1}.
	$$
	Therefore, by Proposition \ref{f2} with $b=0.9^k$,
	$$
	\deg_H(v)\ge|H|-\left(1-\left(\frac{0.9^k}{k-1}\right)^\frac{1}{k-2}\right)^{k-1}{n-1\choose k-1}.
	$$
Thus, all we need to verify is that
	$$\left(1-\left(\frac{0.9^k}{k-1}\right)^\frac{1}{k-2}\right)^{k-1}<0.96^k.$$
	
	To this end, observe that
$$0.96^{k/(k-1)}>0.96^2>0.9,$$
while
$$1-\left(\frac{0.9^k}{k-1}\right)^\frac{1}{k-2}<0.9$$
is equivalent to
$$0.1^{k-2}(k-1)<0.9^k$$
which holds for $k\ge3$.
	\end{proof}

\bibliographystyle{plain}

\end{document}